\newtheorem{lem}{Lemma}
\newtheorem{theorem}{Theorem}
\newcommand{\prob}{{\mathbb P}}
\newcommand{\E}{{\mathbb E}}
\newcommand{\reals}{{\mathbb R}}
\newcommand{\nats}{{\mathbb N}}
\newcommand{\probto}{{\overset{p}{\longrightarrow}}}
\begin{document}

\title{The Proportion of the Population Never Hearing a Rumour}
\author{Y. Duan and A. Ganesh}

\maketitle

\begin{abstract}
Sudbury~\cite{sudbury} showed for the Maki-Thompson model of rumour spreading that the proportion of the population 
never hearing the rumour converges in probability to a limiting constant (approximately equal to $0.203$) as the population 
size tends to infinity. 
We extend the analysis to a generalisation of the Maki-Thompson model. 

\emph{Keywords} Rumour spreading, epidemic processes.

{\bf Subject classification} 60G42, 60G50.
\end{abstract}

\section{Introduction}
The following model of rumour spreading was introducing by Maki and Thompson~\cite{maki73}, as a variant of an earlier 
model of Daley and Kendall~\cite{daley65}: there is a population of size $n$, some of whom initially know a rumour, and 
are referred to as infected. Time is discrete. In each time step, an infected individual chosen uniformly at random (or arbitrarily) 
contacts a member of the population chosen uniformly at random (including itself). If this individual hasn't yet heard the rumour 
(is susceptible), then the contacted individual becomes infected; otherwise, the contacting individual loses interest in spreading 
the rumour and is termed removed (but remains in the population and can be contacted by other infectives. In the Daley-Kendall 
model, if an infective contacts another infective, both become removed, whereas, in the Maki-Thompson model, only the 
initiator of the contact is removed.) The process ends when there are no more infectives. 
A natural question to ask is how many individuals remain susceptible at this terminal time, and consequently never hear 
the rumour. It was shown by Sudbury~\cite{sudbury} that in the large population limit of $n$ tending to infinity, the random 
proportion of the population never hearing the rumour converges in probability to a limiting constant. 

We consider the following generalisation of the Maki-Thompson model: each infective loses interest in spreading the rumour 
(and becomes removed) after $k$ failed attempts, i.e., after contacting infected or removed individuals $k$ times. Here, 
$k\geq 1$ is a specified constant, which is a parameter of the model; if $k=1$, we recover the original model. Our main result 
is as follows.

\begin{theorem} \label{thm:probconv}
Consider the generalisation of the Maki-Thompson model described above, parametrised by $k$ and starting with a single 
infective and $n-1$ susceptibles. Let $S_{\infty}$ denote the number of susceptibles when the process terminates, i.e., 
when the number of infectives hits zero. Then,
$$
\frac{S_{\infty}}{n} \, \probto \, y^* \mbox{ as $n\to \infty$}, 
$$
where $y^*$ is the unique solution in $(0,1)$ of the equation $(k+1)(1-y)=-\log y$, and logarithms are natural unless 
specified otherwise.
\end{theorem}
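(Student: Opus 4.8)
The plan is to recast the entire process in terms of a single sequence of independent, uniformly chosen contact targets $U_1,U_2,\dots\in\{1,\dots,n\}$, exploiting the fact that the target of each contact is uniform and independent of the current state (only whether the target is currently susceptible, and which infective initiates, depend on the history). Label the $n-1$ individuals that start susceptible. A contact at step $j$ is a \emph{success} (it infects a new individual) precisely when $U_j$ is one of these labels and $U_j\notin\{U_1,\dots,U_{j-1}\}$, since an individual leaves the susceptible state exactly when it is first targeted. Hence, writing $A_m$ for the number of \emph{distinct} initially-susceptible individuals among $U_1,\dots,U_m$, the number of susceptibles ever infected up to time $m$ equals $A_m$, a deterministic function of the i.i.d.\ target sequence alone, independent of the rule used to select initiators.

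First I would record an exact identity at the termination time $T$ (the first time the number of infectives is zero). Every individual ever infected is eventually removed, and in this model each removed individual has made exactly $k$ failed contacts. Since the number of individuals ever infected is $1+A_T$ (the initial infective together with one per success), the total number of failures is $k(1+A_T)$, while the number of successes is $A_T$; as every step is a success or a failure,
\[
T=(k+1)A_T+k,\qquad\text{equivalently}\qquad S_\infty=(n-1)-A_T .
\]
This holds pathwise, for any initiator rule, and requires no approximation.

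Next I would control $A_m$ by occupancy concentration. Since $A_m$ changes by at most one if a single $U_i$ is altered, McDiarmid's inequality with a union bound over $m\le (k+1)n+k$ (a range containing $T$, as $T<(k+1)n+k$ by the identity) gives $\sup_m|A_m-\E A_m|=o(n)$ with high probability, where $\E A_m=(n-1)\bigl(1-(1-1/n)^m\bigr)=n\bigl(1-e^{-m/n}\bigr)+O(1)$. Because this is uniform in $m$, I may substitute the random time $T$: writing $a=A_T/n$ and using $T/n=(k+1)a+o(1)$, the estimate $A_T=n\bigl(1-e^{-T/n}\bigr)+o(n)$ becomes, with high probability, $a=1-e^{-(k+1)a}+o(1)$. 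The map $a\mapsto a-\bigl(1-e^{-(k+1)a}\bigr)$ has only the isolated roots $a=0$ and $a=a^*:=1-y^*$, so with high probability $a$ lies in a shrinking neighbourhood of one of these.

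It remains to exclude the trivial root $a=0$, i.e.\ premature extinction. While fewer than $\epsilon n$ successes have occurred, at most $1+\epsilon n$ individuals are non-susceptible, so each contact fails with probability at most $2\epsilon$; a Chernoff/Azuma bound then shows that, with high probability, the number of failures stays below $k$ times the number of successes throughout this phase, so that $I_m=1+A_m-D_m$ (with $D_m$ the number removed, satisfying $D_m\le(\text{failures})/k$) stays positive and the process cannot terminate before $A_m$ reaches $\epsilon n$. Hence $A_T\ge\epsilon n$ with high probability, which excludes the neighbourhood of $0$ and forces $a\probto a^*$. With $S_\infty=(n-1)-A_T$ this yields $S_\infty/n\probto 1-a^*=y^*$, and a short check that $h(y)=(k+1)(1-y)+\log y$ has a unique zero in $(0,1)$ identifies $y^*$. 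I expect the main obstacle to be precisely this interplay between the \emph{random} stopping time $T$ and the occupancy estimate: the bound on $A_m$ must be uniform in $m$ to survive substitution of $T$, and the takeoff argument is essential to select the non-trivial root of the resulting self-consistent equation.
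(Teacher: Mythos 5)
Your embedding of the dynamics into a single i.i.d.\ sequence of uniform targets is valid, and your exact identity $T=(k+1)A_T+k$, $S_\infty=(n-1)-A_T$ is precisely the paper's conservation law $(k+1)S_t+I_t+t=\mathrm{const}$ in different clothing. The McDiarmid/occupancy concentration, being uniform over $m\le (k+1)n+k$, legitimately survives substitution of the random time $T$, and together with the root analysis of $a\mapsto a-(1-e^{-(k+1)a})$ it gives a correct, and genuinely different, replacement for the paper's route through martingales, optional stopping and Chebyshev (its Lemmas 2 and 3). The gap is in your last step, the exclusion of the root $a=0$.

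There you bound the per-step failure probability by the constant $2\epsilon$ throughout the phase $A_m<\epsilon n$ and invoke Chernoff/Azuma to claim that, \emph{with high probability}, the failure count $F_m$ stays below $k$ times the success count throughout the phase. The claim is true, but it does not follow from this argument, because the bound $2\epsilon$ is useless for the first $O(1)$ steps: the event that the initial infective's first $k$ contacts all fail (which terminates the process with $A_T=0$) receives only the bound $(2\epsilon)^k$ under your estimate --- a constant independent of $n$, not $o(1)$ --- and a union of Chernoff bounds over $m$ in the phase produces $\sum_m e^{-c(\epsilon)m}$, whose small-$m$ terms likewise do not vanish as $n\to\infty$. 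What your argument actually yields is $\limsup_n \prob(A_T<\epsilon n)\le h(\epsilon)$ with $h(\epsilon)\to0$ as $\epsilon\to0$, which is weaker than what you assert for fixed $\epsilon$. Early extinction genuinely has probability $o(1)$, but to prove it one must use that the failure probability at step $m$ is at most $(1+A_{m-1})/n=O(m/n)$, i.e.\ truly small at the start; this is exactly the point of the paper's final computation, where the binomial has parameter $j/n$ (not $\epsilon$) and the sum over $j$ is split at $\sqrt{n}$ so that the small-$j$ terms decay like $n^{-kj/2}$. Two repairs stay within your framework: (i) handle the initial segment separately --- the probability of \emph{any} failure among the first $n^{1/2-\gamma}$ steps is at most $\sum_{m\le n^{1/2-\gamma}} m/n=O(n^{-2\gamma})$, so w.h.p.\ $F_m=0$ and $A_m=m$ there, after which your Chernoff bound summed over $m\ge n^{1/2-\gamma}$ is indeed $o(1)$; or (ii) keep your constant-$\epsilon$ bound, run the dichotomy at radius $\epsilon$, conclude $\limsup_n\prob(|a-a^*|>\delta)\le h(\epsilon)$ for every $0<\epsilon<\delta$, and let $\epsilon\to0$. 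With either repair your proof closes and remains distinct from the paper's.
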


The proof is presented in the next section. We observe that $y^*=y^*(k)$ is a decreasing function of $k$, and is 
well-approximated by $e^{-(k+1)}$ for large $k$. This tells us that, qualitatively, the proportion of the population not 
hearing a rumour decays exponentially in the number of failed attempts before agents lose interest in spreading the rumour.

Pittel~\cite{pittel} showed in the Maki-Thompson model that the proportion of nodes not hearing the rumour, suitably centred 
and rescaled, converges in distribution to a normal random variable. An extension of this result to our generalised model is an 
open problem.

\section{Model and Analysis}
Denote by $S_t$ the number of susceptibles present in time slot $t$. If at least one infective is present during this time slot, 
then there is an infection attempt during this time slot, which succeeds with probability $S_t/n$ (or $S(t)/(n-1)$ if an infective 
never contacts itself; the distinction is immaterial for large $n$). In that case, $S(t+1)=S(t)-1$. Otherwise, the number of 
failure attempts associated with the infective node which initiated the contact is incremented by 1; if its value becomes equal 
to $k$, the infective node becomes removed. We could describe this process as a Markov chain by keeping track of 
$I^0_t, I^1_t, \ldots, I^{k-1}_t$, which denote respectively the number of infective nodes which have seen $0,1,\ldots,k-1$ 
failed infection attempts. A simpler Markovian representation is obtained by keeping track of $I_t$, the number of infection 
attempts avaible in time step $t$, which increases by $k$ whenever a new node is infected. We initialise the process with 
$S_0=n-1$ and $I_0=k$; the process terminates when $I_t$ hits zero for the first time. If $I_t>0$, then 
\begin{equation} \label{eq:trans_prob}
(S_{t+1},I_{t+1}) = \begin{cases}
(S_t-1,I_t+k), & \mbox{w.p. } \; S_t/n, \\
(S_t, I_t-1), & \mbox{w.p. } \; 1-(S_t/n), \\
\end{cases}
\end{equation}
where we use the abbreviation w.p. for ``with probability". 

Let $T$ denote the random time that the process terminates, i.e, when $I_t$ hits zero for the first time. We see from 
(\ref{eq:trans_prob}) that $(k+1)S_t+I_t+t$ is conserved. Hence,
$$
(k+1)S_T + T = (k+1)S_0+I_0+0 = (k+1)(n-1) + k,
$$
so that
$$
T= \inf \{ t: (k+1)(n-1-S_t) \leq t-k \} = \inf \{ t: (k+1)(n-S_t) \leq t+1 \}.
$$
Define $\tilde S_t$, $t=0,1,2,\ldots$ to be a Markov process on the state space $\{ 0,1,\ldots,n-1 \}$ with transition 
probabilities
\begin{equation} \label{eq:trans_probs}
p_{s,s} = 1-\frac{s}{n}, \quad p_{s,s-1} = \frac{s}{n}, \quad 0\leq s\leq n,
\end{equation}
with initial condition $\tilde S_0=n-1$. Then $\tilde S_t$ and $S_t$ have the same transition probabilities while $I_t$ is non-zero; 
hence, it is clear that we can couple the processes $S_t$ and $\tilde S_t$ in such a way that they are equal until the random 
time $T$. Consequently, we can write 
\begin{equation} \label{eq:terminal_time}
T= \inf \{ t: (k+1)(n-\tilde S_t) \leq t+1 \},
\end{equation}
which relates $T$ to a level crossing time of a lazy random walk. As the random walk $\tilde S_t$ is non-increasing, $S_T$ is 
explicitly determined by $T$; we have
\begin{equation} \label{eq:terminal_number}
S_T = \tilde S_T = n- \frac{T+1}{k+1}.
\end{equation}
While it is possible to study the random variable $T$ directly by analysing the random walk $\tilde S_t$, we will follow the work 
of Sudbury~\cite{sudbury} and consider a somewhat indirect approach. The random walk $\tilde S_t$ is exactly the same as 
the random walk $s_k$ in that paper, but the level-crossing required for stopping is different.

Define the filtration ${\cal F}_t= \sigma(\tilde S_u, 1\leq u\leq t)$, $t\in \nats$, and notice that the random time $T$ defined 
in (\ref{eq:terminal_time}) is a stopping time, i.e., the event $\{ T\leq t \}$ is ${\cal F}_t$-measurable. Moreover, $T$ is 
bounded by $(k+1)n$. Let
$$
M_1(t) = \left( \frac{n}{n-1} \right )^t \tilde S_t, \quad M_2(t) = \left( \frac{n}{n-2} \right )^t \tilde S_t(\tilde S_t-1).
$$
The lemma below is an exact analogue of a corresponding result in~\cite{sudbury} and follows easily from the transition 
probabilities in (\ref{eq:trans_probs}), so the proof is omitted.

\begin{lem} \label{lem_martingales}
The processes $M_1(t\wedge T)$ and $M_2(t\wedge T)$ are ${\cal F}_t$-martingales. 
\end{lem}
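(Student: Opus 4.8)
The plan is to verify the defining martingale identity by a single one-step conditional-expectation computation for each process, and then to invoke the standard fact that a martingale stopped at a stopping time remains a martingale.

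First I would extract the conditional law of one step of the walk. Conditioning on $\tilde S_t = s$, the transition probabilities (\ref{eq:trans_probs}) say that $\tilde S_{t+1}=s$ with probability $1-s/n$ and $\tilde S_{t+1}=s-1$ with probability $s/n$, so the mean downward jump is $s/n$ and
$$
\E[\tilde S_{t+1} \mid \mathcal{F}_t] = \tilde S_t - \frac{\tilde S_t}{n} = \frac{n-1}{n}\,\tilde S_t .
$$
Multiplying through by $(n/(n-1))^{t+1}$ immediately gives $\E[M_1(t+1)\mid \mathcal{F}_t]=M_1(t)$, which is exactly the martingale identity for $M_1$; the prefactor $(n/(n-1))^t$ is engineered to cancel the geometric factor $(n-1)/n$ appearing in the conditional mean.

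For $M_2$ the only mildly delicate point is the analogous computation with the falling factorial. Conditionally, $\tilde S_{t+1}(\tilde S_{t+1}-1)$ equals $s(s-1)$ with probability $1-s/n$ and $(s-1)(s-2)$ with probability $s/n$; factoring out the common $(s-1)$ leaves the bracket $s(1-s/n)+(s-2)(s/n)$, which simplifies to $s(n-2)/n$, so that
$$
\E[\tilde S_{t+1}(\tilde S_{t+1}-1)\mid \mathcal{F}_t] = \frac{n-2}{n}\,\tilde S_t(\tilde S_t-1).
$$
Multiplying by $(n/(n-2))^{t+1}$ then yields $\E[M_2(t+1)\mid\mathcal{F}_t]=M_2(t)$. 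The choice of the quadratic $\tilde S_t(\tilde S_t-1)$ rather than $\tilde S_t^2$ is precisely what makes this factorisation close up cleanly against a downward unit jump, and the prefactor $(n/(n-2))^t$ is chosen to absorb the resulting factor $(n-2)/n$.

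Finally, since $T$ is a stopping time for the filtration $\{\mathcal{F}_t\}$, and indeed a bounded one with $T\le (k+1)n$, the stopped processes $M_1(t\wedge T)$ and $M_2(t\wedge T)$ inherit the martingale property from $M_1$ and $M_2$ by the usual optional-stopping construction for discrete-time martingales. I do not anticipate any genuine obstacle: the entire content of the lemma reduces to the two elementary conditional-expectation identities above, which is exactly why the paper is able to omit the proof.
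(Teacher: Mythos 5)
Your proof is correct and is exactly the computation the paper has in mind when it says the lemma ``follows easily from the transition probabilities in (\ref{eq:trans_probs})'': a one-step conditional expectation giving the factors $(n-1)/n$ and $(n-2)/n$, followed by Doob's stopped-martingale theorem. (One small remark: boundedness of $T$ is not needed for the stopped process to be a martingale, only that $T$ is a stopping time; the bound $T\leq (k+1)n$ is needed later when the paper applies the optional stopping theorem to evaluate the martingale at $T$.)
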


Applying the optional stopping theorem (OST) to $M_1(t\wedge T)$, we get 
\begin{equation} \label{eq:conv_mart1}
\E \Bigl[ \Bigl( \frac{n}{n-1} \Bigr)^T \tilde S_T \Bigr] = \tilde S_0.
\end{equation}
We show that for large $n$ the above random variables concentrate around their mean values and, after suitable rescaling, 
converging in probability.

\begin{lem} \label{lem:suscep_plim}
Let $\tilde S_T$ denotes the final number of susceptibles and $T$ the random time (number of attempts to spread the rumour) 
after which the process terminates in a population of size $n$. The dependence of $T$ and $\tilde S_T$ on $n$ has been 
suppressed in the notation. Then, 
$$
\Bigl( \frac{n}{n-1} \Bigr)^T \frac{\tilde S_T}{n} \probto 1 \mbox{ as $n\to \infty$.}
$$ 
\end{lem}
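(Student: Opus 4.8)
The plan is to establish convergence in $L^2$ (hence in probability) of the random variable $X_n := \bigl(\tfrac{n}{n-1}\bigr)^T \tilde S_T/n$ to the constant $1$. The first moment is handed to us directly by (\ref{eq:conv_mart1}): dividing both sides by $n$ gives $\E[X_n] = (n-1)/n \to 1$. The heart of the matter is therefore to show that $\E[X_n^2] \to 1$ as well, for then $\mathrm{Var}(X_n) = \E[X_n^2] - (\E[X_n])^2 \to 0$ and Chebyshev's inequality yields $X_n \probto 1$.

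To compute the second moment I would apply the OST to the martingale $M_2(t\wedge T)$ from Lemma~\ref{lem_martingales}, obtaining $\E\bigl[\bigl(\tfrac{n}{n-2}\bigr)^T \tilde S_T(\tilde S_T - 1)\bigr] = \tilde S_0(\tilde S_0-1) = (n-1)(n-2)$. The difficulty is that $X_n^2$ contains the factor $\bigl(\tfrac{n}{n-1}\bigr)^{2T}$, whereas $M_2$ supplies the factor $\bigl(\tfrac{n}{n-2}\bigr)^{T}$, and these two exponential bases differ. The key device for reconciling them is to write $\bigl(\tfrac{n}{n-2}\bigr)^{T} = \bigl(\tfrac{n}{n-1}\bigr)^{2T} R_n$, where $R_n = \bigl(1 + \tfrac{1}{n(n-2)}\bigr)^T$, and to exploit the deterministic bound $T \le (k+1)n$ noted before (\ref{eq:terminal_time}). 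This bound forces $1 \le R_n \le \bigl(1 + \tfrac{1}{n(n-2)}\bigr)^{(k+1)n} = 1 + o(1)$ uniformly, so that $\E\bigl[\bigl(\tfrac{n}{n-1}\bigr)^{2T}\tilde S_T(\tilde S_T-1)\bigr] = (1+o(1))(n-1)(n-2)$; dividing by $n^2$ sends this to $1$.

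It then remains to replace $\tilde S_T(\tilde S_T - 1)$ by $\tilde S_T^2$. Writing $\tilde S_T^2 = \tilde S_T(\tilde S_T-1) + \tilde S_T$, I would argue that the contribution of the linear term is negligible: the same bound $T \le (k+1)n$ gives $\bigl(\tfrac{n}{n-1}\bigr)^{2T} \le \bigl(\tfrac{n}{n-1}\bigr)^{2(k+1)n}$, which is bounded above by a constant (converging to $e^{2(k+1)}$), while $\tilde S_T \le \tilde S_0 = n-1$ since the walk is non-increasing. Hence $\E\bigl[\bigl(\tfrac{n}{n-1}\bigr)^{2T}\tilde S_T/n^2\bigr] = O(1/n) \to 0$. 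Adding the two contributions gives $\E[X_n^2] \to 1$, completing the variance estimate and therefore the proof. The only genuinely delicate point is the mismatch between the bases $\bigl(\tfrac{n}{n-1}\bigr)^2$ and $\tfrac{n}{n-2}$; everything hinges on the fact that $T$ is deterministically of order $n$, which makes the correction factor $R_n$ tend to $1$ uniformly rather than merely pointwise.
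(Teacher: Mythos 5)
Your proof is correct and follows essentially the same approach as the paper: apply the optional stopping theorem to both $M_1(t\wedge T)$ and $M_2(t\wedge T)$, use the deterministic bound $T\le (k+1)n$ to reconcile the mismatched bases $\bigl(\tfrac{n}{n-1}\bigr)^2$ and $\tfrac{n}{n-2}$, and conclude via a variance estimate and Chebyshev's inequality. The only difference is cosmetic: the paper disposes of the quadratic term by the one-sided observation $\bigl(1-\tfrac{1}{n}\bigr)^2 > 1-\tfrac{2}{n}$, which makes that term negative so it can be dropped, whereas you retain it via the two-sided uniform bound $1 \le R_n \le 1+o(1)$; both routes yield a vanishing variance for $\bigl(\tfrac{n}{n-1}\bigr)^T \tilde S_T/n$.
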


\begin{proof}
The proof is largely reproduced from~\cite{sudbury} but is included for completeness. It proceeds by bounding the variance 
of the random variables of interest and invoking Chebyshev's inequality. 
We have by (\ref{eq:conv_mart1}) that
\begin{equation*} 
\mathrm{Var} \Bigl\{ \Bigl( \frac{n}{n-1} \Bigr)^{T} \tilde S_T \Bigr\}
= \E \Bigl[ \Bigl( \frac{n-1}{n} \Bigr)^{-2T} \tilde S_T^2 \Bigr] - \tilde S_0^2, 
\end{equation*} 
whereas, applying the OST to $M_2(t\wedge T)$, we get 
\begin{equation*} 
\E \Bigl[ \Bigl( \frac{n}{n-2} \Bigr)^T (\tilde S_T^2-\tilde S_T) \Bigr] = \tilde S_0^2-\tilde S_0.
\end{equation*}
Combining the last two equations, we can write 
\begin{equation*} 
\begin{aligned}
& \mathrm{Var} \Bigl\{ \Bigl( \frac{n}{n-1} \Bigr) ^{T} \tilde S_T \Bigr\} \\
&= \E \Bigl[ \Bigl( \frac{n-1}{n} \Bigr)^{-2T} \tilde S_T^2 \Bigr] - 
\E \Bigl[ \Bigl( \frac{n}{n-2} \Bigr)^T (\tilde S_T^2 - \tilde S_T) \Bigr] -\tilde S_0 \\
&= \E \Bigl\{ \Bigl[ \Bigl( \frac{n-1}{n} \Bigr)^{-2T} - \Bigl( \frac{n-2}{n} \Bigr)^{-T} \Bigr] \tilde S_T^2 \Bigr\} 
+ \E \Bigl[ \Bigl( \frac{n}{n-2} \Bigr)^T \tilde S_T \Bigr] - \tilde S_0.
\end{aligned}
\end{equation*} 
Now, the first term in the above sum is negative, since $(1-\frac{1}{n})^2>1-\frac{2}{n}$. Next, since $T$ is bounded 
above by $(k+1)n$, we have
\begin{equation*} 
\begin{aligned} 
\mathrm{Var} \Bigl\{ \Bigl( \frac{n}{n-1} \Bigr)^{T} \tilde S_T \Bigr\} 
&< \E \Bigl[ \Bigl( \frac{n}{n-1} \frac{n-1}{n-2} \Bigr)^{T} \tilde S_T \Bigr] - \tilde S_0 \\
&\leq \Bigl( \frac{n-1}{n-2} \Bigr)^{(k+1)n} \E \Bigl[ \Bigl( \frac{n}{n-1} \Bigr)^{T} \tilde S_T \Bigr]-\tilde S_0 \\
&\sim (e^{k+1}-1) \tilde S_0,
\end{aligned}
\end{equation*}
where we have used the fact that $\E \bigl[ (\frac{n}{n-1})^T \tilde S_T \bigr] =\tilde S_0$ to obtain the asymptotic 
equivalence on the last line. (Recall that, for sequences $x_n$ and $y_n$, we write $x_n\sim y_n$ to denote that 
$x_n/y_n \to 1$ as $n\to \infty$.) Thus, we conclude that
$$
\mathrm{Var} \Bigl\{ \Bigl( \frac{n}{n-1} \Bigr)^{T} \frac{\tilde S_T}{n} \Bigr\} \leq 
\frac{ (e^{k+1}-1) \tilde S_0}{n^2},
$$
which tends to zero as $n$ tends to infinity, since $\tilde S_0=n-1$. The claim of the lemma now follows from 
(\ref{eq:conv_mart1}) and Chebyshev's inequality.
\end{proof}

Consider the sequence of random vectors $\bigl( \frac{T}{n}, \frac{\tilde S_T}{n} \bigr)$, which take values in the compact set 
$K=[0,k+1]\times [0,1]$; the dependence of $T$ and $\tilde S_T$ on $n$ has not been made explicit in the notation. 
Define $f:K\to \reals^2$ by
\begin{equation} \label{eq:function}
f(x,y) = \Bigl( \frac{x}{k+1}+y-1, e^x y-1 \Bigr).
\end{equation}
Then we see from (\ref{eq:terminal_number}) and Lemma~\ref{lem:suscep_plim} that
\begin{equation} \label{eq:plim_function}
f ( T/n, \tilde S_T/n ) \probto (0,0) \mbox{ as $n\to \infty$.}
\end{equation}
We want to use this to prove convergence in probability of the sequences $T/n$ and $\tilde S_T/n$.

Firstly, we observe that if $f(x,y)=(0,0)$, then $y$ solves the equation $(k+1)(1-y) + \log y=0$, and $x=(k+1)(1-y)$. 
The function $y\mapsto (k+1)(1-y)+\log y$ is strictly concave and is zero at $y=1$; by considering its derivative at 1 
and its value near 0, it can be seen that the function has one other zero, which lies in $(0,1)$. Call this value $y^*$ and 
define $x^*=(k+1)(1-y^*)$. We now have the following.

\begin{lem} \label{lem:convprob_image}
Fix $\delta>0$. Then, as $n$ tends to infinity,
$$
\prob \bigl( ( T/n, \tilde S_T/n ) \notin B_{\delta}(0,1) \cup B_{\delta}(x^*,y^*) \bigr) \to 0,
$$
where $B_{\delta}(x,y)$ denotes the open ball of radius $\delta$ centred on $(x,y)$.
\end{lem}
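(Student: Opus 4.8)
The plan is to deduce the lemma from the convergence in probability (\ref{eq:plim_function}), together with the fact---established in the paragraph immediately preceding the statement---that the only points of $K$ at which $f$ vanishes are $(0,1)$ and $(x^*,y^*)$. The underlying principle is essentially deterministic: because $f$ is continuous and its zero set in the compact set $K$ consists of just these two isolated points, any point of $K$ at which $\|f\|$ is small must lie near one of them. The probabilistic input (\ref{eq:plim_function}) then guarantees that $(T/n,\tilde S_T/n)$ is such a point with high probability.

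To carry this out, I would fix $\delta>0$ and introduce the ``bad set''
$$
A_\delta = K \setminus \bigl( B_\delta(0,1) \cup B_\delta(x^*,y^*) \bigr),
$$
which is a closed subset of $K$ and hence compact. If $A_\delta$ is empty there is nothing to prove, so I would assume otherwise. The map $(x,y)\mapsto\|f(x,y)\|$ is continuous and therefore attains a minimum on the compact set $A_\delta$. Since $A_\delta$ excludes both zeros of $f$, and these are the only zeros in $K$, this minimum is strictly positive; call it $\epsilon>0$.

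Finally, I would combine this uniform lower bound with (\ref{eq:plim_function}). As $(T/n,\tilde S_T/n)$ always lies in $K$, the event that it belongs to $A_\delta$ is contained in the event $\{\|f(T/n,\tilde S_T/n)\|\geq\epsilon\}$, whence
\begin{align*}
\prob\bigl( (T/n,\tilde S_T/n)\notin B_\delta(0,1) \cup B_\delta(x^*,y^*) \bigr)
&= \prob\bigl( (T/n,\tilde S_T/n)\in A_\delta \bigr) \\
&\leq \prob\bigl( \|f(T/n,\tilde S_T/n)\|\geq\epsilon \bigr),
\end{align*}
and the right-hand side tends to zero by (\ref{eq:plim_function}).

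The argument is short, and I do not anticipate a genuine obstacle. The one point requiring care is the strict positivity of $\epsilon$: this is exactly where compactness of $A_\delta$ and the isolation of the two zeros of $f$ are needed, since without compactness the infimum of $\|f\|$ over the zero-free region could in principle be zero. It is also worth handling the trivial boundary case in which $A_\delta$ is empty (for $\delta$ large enough to cover $K$ with the two balls), which is disposed of separately as above.
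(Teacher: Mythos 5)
Your proof is correct and is essentially the same argument as the paper's: both rest on the compactness of $K$ minus the two balls, the continuity of $\|f\|$, and the isolation of the two zeros to extract a uniform lower bound $\epsilon>0$, and then apply (\ref{eq:plim_function}). The only difference is presentational --- the paper wraps this in a proof by contradiction, whereas your direct version is, if anything, slightly cleaner.
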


\begin{proof}
Suppose this is not the case. Then, there is an $\alpha>0$ and infinitely many $n$ such that 
$$
\prob \bigl( (T/n, \tilde S_T/n) \notin B_{\delta}(0,0) \cup B_{\delta}(x^*,y^*) \bigr) >\alpha.
$$
Since $f$ is continuous, so is its norm. Hence, its minimum on the compact set $K\setminus \{ B_{\delta}(0,0) 
\cup B_{\delta}(x^*,y^*) \}$ is attained, and must be strictly positive as $f$ has no zeros other than 
$(0,1)$ and $x^*,y^*)$. Hence, there is an $\epsilon>0$ such that $\| f(x,y)\|>\epsilon$ whenever 
$(x,y)\notin B_{\delta}(0,1) \cup B_{\delta}(x^*,y^*)$. Thus, we have shown that there are infinitely many 
$n$ such that 
$$
\prob ( \| f(T/n,\tilde S_T/n) \| > \epsilon) >\alpha,
$$
which contradicts (\ref{eq:plim_function}). This proves the claim of the lemma.
\end{proof}

Next, define $\tau_j =\inf \{ t: \tilde S_t = n-j \}$, $X_j= \tau_{j+1}-\tau_j$, and observe from (\ref{eq:trans_probs}) 
and the initial condition $\tilde S_0=n-1$ that 
\begin{equation} \label{eq:stilde_geom_sum}
\tau_1 = 0, \quad X_j \sim Geom \Bigl( \frac{n-j}{n} \Bigr),
\end{equation}
and that $X_j, j=1,\ldots,n-1$ are mutually independent; here, $\sim$ denotes equality in distribution. We also have from 
(\ref{eq:terminal_time}) that 
\begin{equation} \label{eq:terminal}
n-\tilde S_T = \inf \{ j: X_1+\ldots+X_j \geq (k+1)j \}.
\end{equation}
We now need the following elementary tail bound on the binomial distribution in order to complete the proof of 
Theorem~\ref{thm:probconv}.

\begin{lem} \label{lem:binom_ld}
Let $X$ be binomially distributed with parameters $n$ and $p$, denoted $X\sim Bin(n,p)$. Then, for any $q>p$, we have 
$$
\prob(X \geq nq) \leq \exp \Bigl( -n\Bigl[q\log\frac{q}{p}-q+p \Bigr] \Bigr).
$$
\end{lem}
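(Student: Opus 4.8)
The plan is to prove a standard Chernoff-type upper bound for the upper tail of a binomial random variable. The cleanest route is the exponential moment (Markov/Chernoff) method. First I would fix $\lambda > 0$ and apply Markov's inequality to the random variable $e^{\lambda X}$, writing
\begin{equation*}
\prob(X \geq nq) = \prob(e^{\lambda X} \geq e^{\lambda n q}) \leq e^{-\lambda n q}\, \E[e^{\lambda X}].
\end{equation*}
Since $X \sim Bin(n,p)$ is a sum of $n$ i.i.d.\ Bernoulli$(p)$ variables, the moment generating function factorises, giving $\E[e^{\lambda X}] = (1-p+pe^{\lambda})^n$. Substituting yields the bound $\prob(X \geq nq) \leq \exp\bigl(-n[\lambda q - \log(1-p+pe^{\lambda})]\bigr)$, valid for every $\lambda > 0$.

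The next step is to optimise the exponent over $\lambda$, i.e.\ to choose $\lambda$ maximising $g(\lambda) = \lambda q - \log(1-p+pe^{\lambda})$. Differentiating and setting $g'(\lambda) = 0$ gives $q = pe^{\lambda}/(1-p+pe^{\lambda})$, which solves to $e^{\lambda} = \frac{q(1-p)}{p(1-q)}$. Because $q > p$, this value of $\lambda$ is strictly positive, so it is admissible. Substituting the optimal $\lambda$ back into $g$, the term $\log(1-p+pe^{\lambda})$ simplifies to $\log\frac{1-p}{1-q}$, and after collecting terms one obtains
\begin{equation*}
g(\lambda) = q\log\frac{q}{p} + (1-q)\log\frac{1-q}{1-p}.
\end{equation*}
This is the exact Kullback--Leibler rate function for the binomial tail.

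Finally I would reconcile this exact exponent with the weaker bound claimed in the statement, which has exponent $q\log\frac{q}{p} - q + p$. The reconciliation follows from bounding the term $(1-q)\log\frac{1-q}{1-p}$ from below. Writing $1-q = (1-p) - (q-p)$ and using the elementary inequality $\log(1+u) \geq u/(1+u)$, or more directly $(1-q)\log\frac{1-q}{1-p} \geq (1-q)\cdot\bigl(1 - \frac{1-p}{1-q}\bigr) = (q-p) - q + p\cdot\frac{?}{}$, one checks that $(1-q)\log\frac{1-q}{1-p} \geq -(q-p) = p - q$ by the bound $\log x \geq 1 - 1/x$. Hence $g(\lambda) \geq q\log\frac{q}{p} + p - q$, which gives exactly the stated bound. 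I expect the main obstacle to be precisely this last estimate: one must apply the correct direction of the inequality $\log x \geq 1 - \frac{1}{x}$ (with $x = \frac{1-q}{1-p} < 1$) to discard the second KL term cleanly, since a careless sign would reverse the bound. Everything else is routine once the Chernoff optimisation is carried out.
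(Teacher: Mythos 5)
Your proposal is correct and takes essentially the same approach as the paper: both reduce to the exact Chernoff/Kullback--Leibler exponent $q\log\frac{q}{p}+(1-q)\log\frac{1-q}{1-p}$ and then discard the second term via the same elementary estimate (your $\log x\ge 1-\frac{1}{x}$ with $x=\frac{1-q}{1-p}$ is precisely the paper's $\log x\le x-1$ with $x=\frac{1-p}{1-q}$), the only difference being that you derive the Chernoff bound by the exponential-moment method whereas the paper cites it as standard. One cosmetic remark: the garbled intermediate expression in your final paragraph should be deleted, but the estimate you then state, $(1-q)\log\frac{1-q}{1-p}\ge p-q$, is correct and completes the proof.
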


\begin{proof} Recall the well-known large deviations bound,
$$
\prob(X \geq nq) \leq \exp (-nH(q;p)), \mbox{ where } H(q;p)=q\log\frac{q}{p}+(1-q)\log\frac{1-q}{1-p},
$$
which is a consequence of Sanov's theorem. This inequality, or slight variants, are known as Bernstein or Chernoff bounds. 

The claim of the lemma follows from the above inequality by noting that 
$$
(1-q)\log\frac{1-p}{1-q} \leq (1-q)\Bigl( \frac{1-p}{1-q}-1 \Bigr) = q-p,
$$
which follows from the inequality $\log x\leq x-1$.
\end{proof}

\begin{proof}[Proof of Theorem~\ref{thm:probconv}]
In vew of Lemma~\ref{lem:convprob_image}, it remains only to show, for some $\epsilon \in (0,y^*)$, that 
$\prob(\tilde S_T/n >1-\epsilon)$ tends to zero as $n$ tends to infinity. 

Fix $\epsilon>0$. For each $j\in \nats$,  let $Y^{(j)}_i$, $i\in \nats$ be iid random variables with a $Geom((n-j)/n)$ distribution. 
Then $Y^{(j)}_i$ stochastically dominates $X_i$ for every $i\leq j$, and we see from (\ref{eq:terminal}) that 
\begin{equation*}
\begin{aligned}
\prob(\tilde S_T/n \geq 1-\epsilon) &= \prob(\exists j\leq \epsilon n: X_1+\ldots+X_j \geq (k+1)j) \\
&\leq \sum_{j=1}^{\lfloor \epsilon n \rfloor} \prob(X_1+\ldots+X_j \geq (k+1)j) \\
&\leq \sum_{j=1}^{\lfloor \epsilon n \rfloor} \prob \bigl( Y^{(j)}_1+\ldots+Y^{(j)}_j \geq (k+1)j \bigr) \\
&\leq \sum_{j=1}^{\lfloor \epsilon n \rfloor} \prob \Bigl( Bin\Bigl( (k+1)j-1, \frac{n-j}{n} \Bigr) \leq j-1 \Bigr) \\
&= \sum_{j=1}^{\lfloor \epsilon n \rfloor} \prob \Bigl( Bin\Bigl( (k+1)j-1, \frac{j}{n} \Bigr) \geq kj \Bigr) \\
&\leq \sum_{j=1}^{\lfloor \epsilon n \rfloor} \prob \Bigl( Bin\Bigl( (k+1)j, \frac{j}{n} \Bigr) \geq kj \Bigr).
\end{aligned}
\end{equation*}
Hence, it follows from Lemma~\ref{lem:binom_ld} that, for $\epsilon<\frac{k}{k+1}$, we have 
\begin{equation*}
\begin{aligned}
\prob(\tilde S_T/n \geq 1-\epsilon) &\leq \sum_{j=1}^{\lfloor \epsilon n \rfloor} 
\exp \Bigl( -(k+1)j \Bigl[ \frac{k}{k+1}\log \frac{kn}{(k+1)j}-\frac{k}{k+1}+\frac{j}{n} \Bigr] \Bigr) \\
&\leq \sum_{j=1}^{\lfloor \epsilon n \rfloor} \exp \Bigl( -kj \log \frac{kn}{(k+1)j} +kj \Bigr) \\
&\leq \sum_{j=1}^{\lfloor \frac{k}{k+1}\frac{\sqrt{n}}{e}-1 \rfloor} n^{-kj/2} +
\sum_{j=\lfloor \frac{k}{k+1}\frac{\sqrt{n}}{e} \rfloor}^{\lfloor \epsilon n \rfloor} e^{-kj}.
\end{aligned}
\end{equation*}
It is easy to see that both sums above vanish as $n$ tends to infinity. This completes the proof of the theorem.
\end{proof}

\end{document}